\newtheorem{theorem}{Theorem}[section]
\newtheorem{lemma}[theorem]{Lemma}
\newtheorem{corollary}{Corollary}[theorem]
\renewcommand{\to}[1][]{ \xrightarrow{#1} }
\title{The Forestry of Adversarial Totient Iterations}
\author{LUIS PALACIOS VELA, CHRISTIAN WOLIRD}
\affil{University of California, Riverside}
\date{}
\begin{document}

\maketitle

\begin{abstract}
   We give a closed-form expression for $\varphi(1+\varphi(2+\varphi(3+...+\varphi(n)$, where $\varphi$ is Euler's totient function. More generally, for an integer sequence $A=\{a_j\}$ we study the value of $A^\varphi(n)=\varphi(a_1+\varphi(a_2+\varphi(a_3+...+\varphi(a_n)$ when $A$ is the perfect squares or the perfect cubes. We show $A^\varphi(n)$ is bounded for all sequences considered.
   We also present the \textit{Arboreal Algorithm} which can sometimes determine a closed form of $A^\varphi(n)$ using tree-like structures.
\end{abstract}

\section{Adversarial Totient Iterations}

The expression
\begin{equation*}
  \mathbb{N}^\varphi(n)=\varphi(1+\varphi(2+\varphi(3+...+\varphi(n)  
\end{equation*}
serves as a guiding example throughout this paper since rigorously determining its exact value was the initial goal of the authors\footnote{Regarding notation, we will omit closing parenthesis in these sorts of expressions when possible for readability. 
We are also using a slight-of-hand here; ``$\mathbb{N}$" refers to the positive integers as a sequence, rather than as a set. }.
As a reminder, Euler's totient function $\varphi(x)$ counts the coprime positive integers $y\le x$. For instance,
\begin{equation*}
    \varphi(12)=|\{1,5,7,11\}|=4\quad\text{and}\quad \varphi(1)=|\{1\}|=1.
\end{equation*}
So we can evaluate, say, $\mathbb{N}^\varphi(3)$ as
\begin{equation*}
    \varphi(1+\varphi(2+\varphi(3)))=\varphi(1+\varphi(4))=\varphi(3)=2.
\end{equation*}
More intuitively, we think of this evaluation as a sequence in which the terms are alternately increased by a predetermined increment and then reduced by $\varphi$.
\begin{equation*}
    0
    \color{OliveGreen}{ \to[+3] }
    3
    \color{red}{ \to[\ \varphi \ ] }
    2
    \color{OliveGreen}{ \to[+2] }
    4
    \color{red}{ \to[\ \varphi \ ] }
    2    
    \color{OliveGreen}{ \to[+1] }
    3
    \color{red}{ \to[\ \varphi \ ] }
    2.
\end{equation*}

Accordingly, we call these \textit{adversarial totient iterations} since they are a sort of competition between $\varphi$ pulling the sequence down and some arbitrary increments (the positive integers here) pushing the sequence up.

\newpage

\begin{figure}[h]
    \centering
    \includegraphics[scale=0.3]{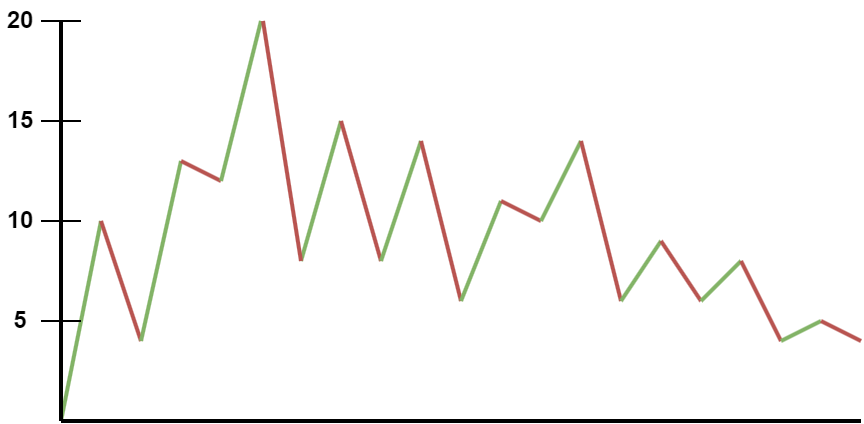}
    \caption{The adversarial iteration for $\mathbb{N}^\varphi(10)$.}
\end{figure}

While on terminology, we call $A^\varphi(n)$ in general the \textit{scoreboard function} since it tells us which mathematical entity ``won" the aforementioned competition. Roughly speaking, if $A^\varphi(n)$ is large, then the increment sequence $A$ successfully pushed the iteration upwards against $\varphi$. But if instead $A^\varphi(n)$ is small, then $\varphi$ pulled the sequence down against $A$. 

In fact, one can think of this ``competition" as a strange sort of infinite baseball game where the away team $A$ only bats and the home team $\varphi$ only fields. We declare $A$ the winner if they can increase their score arbitrarily and instead declare $\varphi$ the winner if they can keep $A$'s points below some finite bound. 

Accordingly, given a sequence $A$, we can define its \textit{scoreboard sequence}
\begin{equation*}
    A^\varphi 
    \quad=\quad 
    (A^\varphi(1),\quad A^\varphi(2),\quad A^\varphi(3),\quad ...\quad ).
\end{equation*}
where each term is like the score for one inning of our infinite baseball game. One of the authors' long-term aims is to determine for which sequences $A$ is the scoreboard sequence $A^\varphi$ bounded.

Returning to our guiding example, one can try a handful of examples and correctly guess that
\begin{equation*}
    \varphi(1+\varphi(2+\varphi(3+...+\varphi(n)
    =\begin{cases}
    1 & n=1,2 \\
    2 & n=3,4 \\
    4 & n\ge 5
\end{cases}
\end{equation*}

This formula is effectively a ``victory" for $\varphi$ against the positive integers since the scoreboard sequence is bounded:
\begin{equation*}
    \mathbb{N}^\varphi=(1,1,2,2,4,4,4,...).
\end{equation*}
This fact was first noticed by \emph{Manuel Alejandro Leal Camacho}, an author's classmate, who verified it computationally up to $n<10^5$ or so. 
Proving this rigorously however is a bit more work and takes up the next section.

\newpage

For now, we give an outline of our method. Starting with elementary observations of Euler's totient function $\varphi$, we obtain an inductive upper bound on the partial evaluations of the scoreboard function $\mathbb{N}^\varphi(n)$. In particular, it will follow that
\begin{equation*}
    \mathbb{N}^\varphi(n)\le 6.
\end{equation*}

Secondly, we create an algorithm to take repeated pre-images of $\varphi$ in a recursive manner. 
We call this the \textit{Arboreal Algorithm} (since its output is a collection of trees). 
The results, which we call \textit{totient trees}, can sometimes give us case equations for scoreboard functions.

\begin{figure}[h]
    \centering
    \includegraphics[scale=0.24]{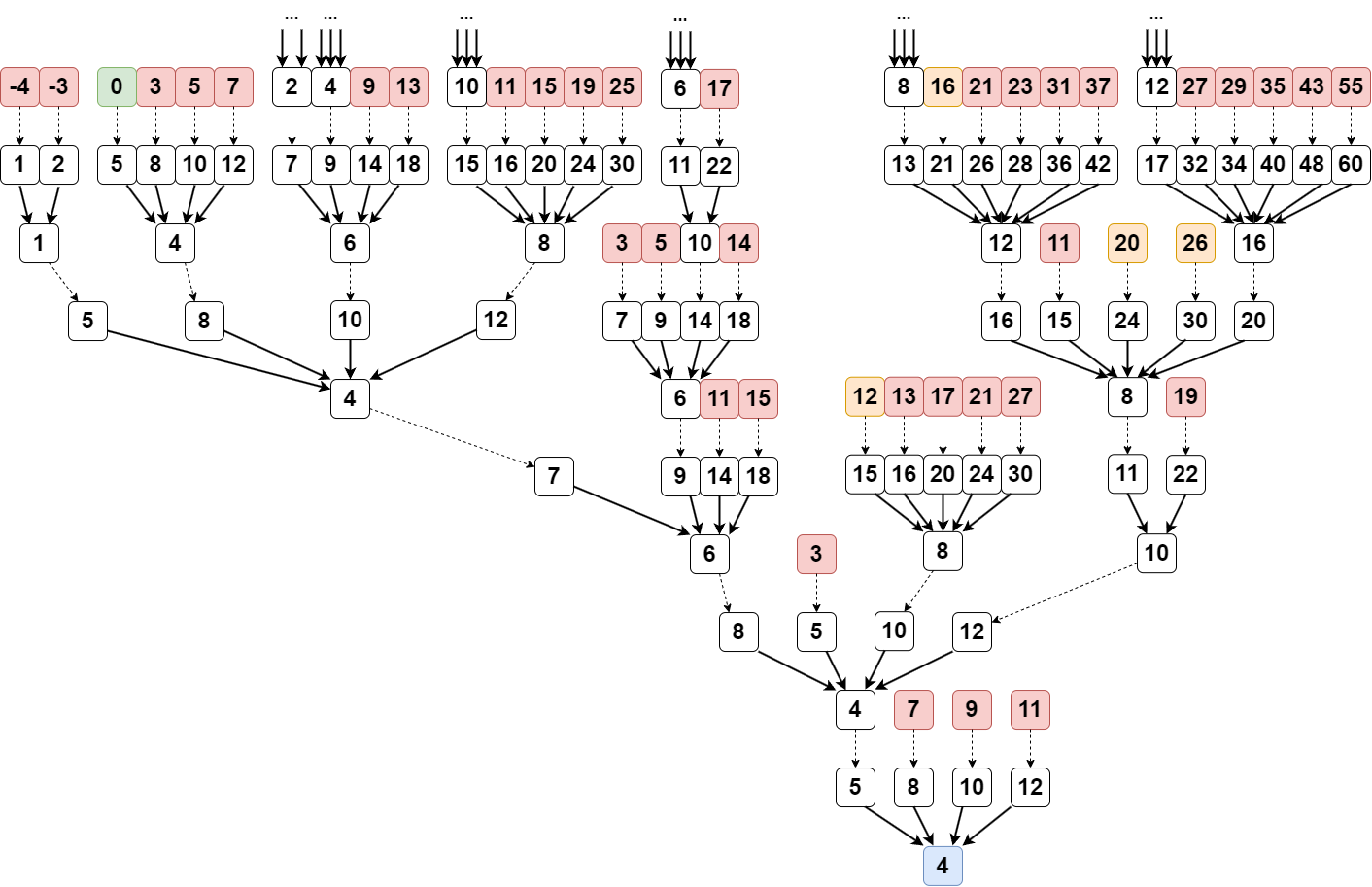}
    \caption{The totient tree produced by the Arboreal Algorithm for $\mathbb{N}^\varphi(n)=4$.}
\end{figure}

Once our opening example has been treated, we will go on to treat two more exotic variations. For instance, these same methods also yield the expression
\begin{equation*}
    \varphi(1+\varphi(4+\varphi(9+...+\varphi(n^2)
    =\begin{cases}
        1 & n=1 \\
        2 & n=2 \\
        4 & n=3 \\
        6 & n=4,5,6 \\
        22 & n=9,14,15,17,24,26,30,31,32,\\
         & 33,34,35,38,40,47,53,59,69 \\
        16 & \text{otherwise}
    \end{cases}
\end{equation*}
which can be seen as a ``victory" for $\varphi$ against the perfect squares. In fact, it would not be hard to generalize our methods to any sequence having polynomial growth and a positive lower density of even integers.

\newpage

\section{Euler's $\varphi$ vs. The Positive Integers}

Given a sequence $A=\{a_j\}$ we have to evaluate its scoreboard function
\begin{equation*}
    A^\varphi(n)=\varphi(a_1+\varphi(a_2+\varphi(a_3+...+\varphi(a_n).
\end{equation*}
from right to left as in the previous section's example. So, to continue our analysis, we need a better notation for the \textit{partial evaluations} obtained as we move from right to left. Let's define
\begin{equation*}
    A^\varphi(n, n):=0,
    \quad\text{and}\quad 
    A^\varphi(n, k-1):=\varphi\left(a_k + A^\varphi(n, k)\right).
\end{equation*}
Intuitively, $A^\varphi(n,k)$ is what you get by ignoring the outermost $k$ parenthesis in the former expression. Hence $A^\varphi(n,0)=A^\varphi(n)$.

In addition to this notation, we also need some facts about Euler's totient function. For all integers $n\ge 1$, we have:
\begin{itemize}
    \item $\varphi(n) \le n$ with equality only if $n=1$.
    \item $\varphi(2n)\le n$ with equality only if $n$ is a power of two.
    \item $\varphi(n)$ is even, when $n\geq 3$.
\end{itemize}
The first fact follows almost immediately from the definition of $\varphi$. 
The second fact follows from the observation that an odd number is coprime to $2n$ if it was also coprime of $n$.
Finally, the third fact follows quickly after noticing that if $n$ and $k$ are coprime, then so are $n$ and $n-k$. 

With all this background, we are ready for the first tool in our bag.

\begin{lemma}
    For all $n\ge 1$ and odd $k\ge 1$
    \begin{equation*}
        \mathbb{N}^\varphi(n,k)\le 2k+4.
    \end{equation*}
\end{lemma}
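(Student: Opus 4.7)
The plan is to prove this by downward induction on the odd index $k$. The base cases cover the largest odd indices at most $n$: if $k = n$ (so $n$ is odd) then $\mathbb{N}^\varphi(n, n) = 0$, and if $k = n - 1$ (so $n$ is even) then $\mathbb{N}^\varphi(n, n-1) = \varphi(n) \le n = k + 1$; both are comfortably below $2k + 4$.

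For the inductive step, I would suppose $k$ is odd with $k + 2 \le n$ and use the inductive hypothesis $\mathbb{N}^\varphi(n, k+2) \le 2k + 8$. Unfolding the recursion two steps at once,
\[
\mathbb{N}^\varphi(n, k) \;=\; \varphi\bigl(k+1 + \varphi(k+2 + \mathbb{N}^\varphi(n, k+2))\bigr),
\]
the key is careful parity tracking. Since $\mathbb{N}^\varphi(n, k+2)$ is either $0$ or the value of $\varphi$ at an integer $\ge 3$, it is even; combined with $k+2$ being odd, the inner sum is odd and $\ge 3$, which by the third bullet forces the middle value $\mathbb{N}^\varphi(n, k+1)$ to be even (and in fact $\ge 2$). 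The outermost argument $k+1 + \mathbb{N}^\varphi(n, k+1)$ is then even and $\ge 4$, so the halving bound $\varphi(2m) \le m$ can be deployed on the outside, while the cruder $\varphi(N) \le N - 1$ suffices in the middle.

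Running this arithmetic, one finds $\mathbb{N}^\varphi(n, k) \le (k+1 + (k+1 + \mathbb{N}^\varphi(n, k+2)))/2 \le 2k + 5$, which is one more than claimed: this off-by-one is the principal obstacle. I would close it with a final parity observation: the outermost $\varphi$-argument is even and $\ge 4$, so by the third bullet $\mathbb{N}^\varphi(n, k)$ is itself even, and an even integer bounded above by the odd number $2k + 5$ is automatically $\le 2k + 4$. The essential point is that none of the three facts about $\varphi$ individually yield $2k + 4$, but chaining them through two applications of the recurrence—with parity squeezing out one extra unit at the end—does.
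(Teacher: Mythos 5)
Your proof is correct and follows essentially the same strategy as the paper's: downward induction two indices at a time, a strict inequality (\(\varphi(N)\le N-1\)) on the middle application of \(\varphi\), the halving bound on the outer even argument, and a final parity observation to squeeze out the last unit from \(2k+5\) down to \(2k+4\). The one genuine streamlining over the paper's proof is your observation that \(\mathbb{N}^\varphi(n,k+2)\) is always even in the inductive step --- being either \(0\) or \(\varphi\) of an integer \(\ge k+3\ge 4\) --- which lets you skip the paper's explicit sub-case \(\mathbb{N}^\varphi(n,k)=1\) (a case the paper handles but which is in fact vacuous once \(k\ge 3\)).
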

\begin{proof}
    We induct decreasingly on $k$. 
    If $n$ itself is odd then our base case,
    \begin{equation*}
        \mathbb{N}^\varphi(n,n)=0
    \end{equation*}
    clearly satisfies the bound $0<2n+4$. 
    On the other hand, if $n$ is instead even then our base case is
    \begin{equation*}
        \mathbb{N}^\varphi(n,n-1)=\varphi(n) 
        \le n \le 2(n-1)+4.
    \end{equation*}

    For our inductive case, we assume that $\mathbb{N}^\varphi(n,k)$ satisfies the bound for some $k\ge 3$ and must show from this that holds true for $\mathbb{N}^\varphi(n,k-2)$, i.e.,
    \begin{equation*}
        \mathbb{N}^\varphi(n,k-2)\le 2(k-2)+4=2k.
    \end{equation*}

    Firstly, we notice that $\mathbb{N}^\varphi(n,k)$ is, by definition, either $0$ or in the image of Euler's totient function. By the third fact before the lemma, this implies that $\mathbb{N}^\varphi(n,k)$ is either even or $1$. 
    In the case that $\mathbb{N}^\varphi(n,k)=1$, the induction holds since
    \begin{align*}
        \mathbb{N}^\varphi(n,k-2)
        &=\varphi(k-1+\mathbb{N}^\varphi(n,k-1))\\
        &\le k-1+\mathbb{N}^\varphi(n,k-1)\\
        &= k-1 +\varphi(k+\mathbb{N}^\varphi(n,k))\\
        &\le k-1 + k + \mathbb{N}^\varphi(n,k)\\
        &=2k.
    \end{align*}
    If instead $\mathbb{N}^\varphi(n,k)$ is even, we must be a touch trickier. In this case, our inductive hypothesis tells us that
    \begin{equation*}
        \mathbb{N}^\varphi(n,k-1)
        \quad=\quad
        \varphi(k+\mathbb{N}^\varphi(n,k))
        \quad\le\quad
        k+\mathbb{N}^\varphi(n,k-1)
        \quad\le\quad 
        3k+4
    \end{equation*}
    
    Now, before we move on to $\mathbb{N}^\varphi(n,k-2)$, we can actually tighten this bound on $\mathbb{N}^\varphi(n,k-1)$ slightly. 
    We do this by recalling the ``only if" part of the first $\varphi$ fact listed before the lemma. 
    In our case $k\ge 3$, so we have rather obviously that $k+\mathbb{N}^\varphi(n,k)>1$. 
    It follows that the middle inequality of the former expression is \emph{strict}.
    Thus, we actually have
    $\mathbb{N}^\varphi(n,k-1)\le 3k+3$.

\begin{figure}[h]
    \centering
    \includegraphics[scale=0.24]{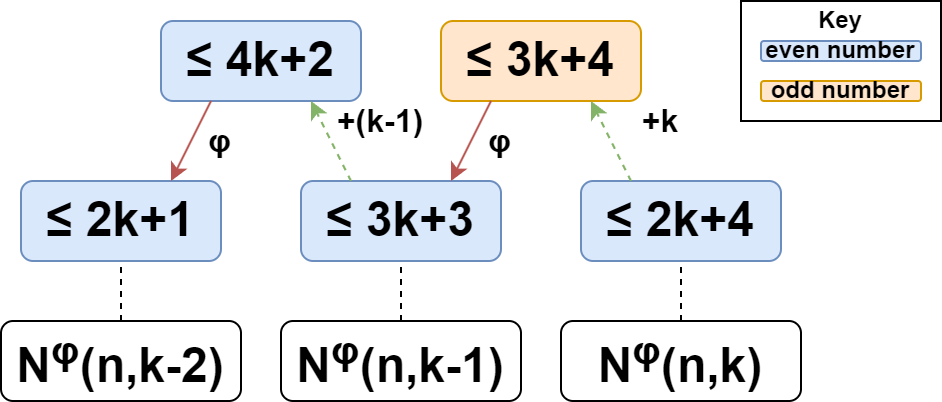}
    \caption{Progression of the upper bound on partial evaluations.}
\end{figure}

    From this, we can conclude
    \begin{gather*}
        \mathbb{N}^\varphi(n,k-2)
        \quad\le\quad \varphi(k-1+\mathbb{N}^\varphi(n,k-1))
        \quad\le\quad 
        \frac{1}{2}\left(k-1+\mathbb{N}^\varphi(n,k-1)\right)\\
        \le \frac{1}{2}(4k+2)=2k+1.
    \end{gather*}
    The factor of $\frac{1}{2}$ follows from the 2nd $\varphi$ fact listed before the lemma. 
    Using that both $k-1$ and $\mathbb{N}^\varphi(n,k-1)$ are even, so it is their sum, which at the very least, $\varphi$ cuts in half.

    Lastly, we point out that $\mathbb{N}^\varphi(n,k-2)$ is a even number bounded by an odd number, $2k+1$. 
    So we can actually tighten the bound by $1$ obtaining
    \begin{equation*}
        \mathbb{N}^\varphi(n,k-1)\le 2k=2(k-2)+4,
    \end{equation*}
    completing the induction.
\end{proof}

\newpage

The former proof was much more tedious than it really had to be. 
We could have gotten away with a bound of, say, $2k+8$ more easily. 
The remaining analysis would go through without too much extra trouble. 
We wanted to tighten down this first lemma as much as possible, to get everything out of it we could, and to test our bounds. 
We will be less picky in later proofs.

Now, speaking of ``the remaining analysis", we start by pointing out one result that follows from the proof we just did.

\begin{corollary}
    For all $n\ge 1$ and even $\ell \ge 0$
    \begin{equation*}
        \mathbb{N}^\varphi(n,\ell)\le 3\ell+6
    \end{equation*}
\end{corollary}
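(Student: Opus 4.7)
The plan is to deduce the corollary directly from the lemma by unrolling one additional $\varphi$-step. For any even $\ell \ge 0$ with $\ell < n$, set $k := \ell + 1$; this is odd and at most $n$, so the lemma applies and gives $\mathbb{N}^\varphi(n, \ell+1) \le 2(\ell+1) + 4 = 2\ell + 6$. By the definition of partial evaluations,
\[
\mathbb{N}^\varphi(n, \ell) = \varphi\bigl((\ell+1) + \mathbb{N}^\varphi(n, \ell+1)\bigr).
\]

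The key move is to invoke the strict form of the first totient fact from Section~2: since $\varphi(m) \le m$ with equality only when $m = 1$, we actually have $\varphi(m) \le m - 1$ for every $m \ge 2$. Applied here, this yields
\[
\mathbb{N}^\varphi(n, \ell) \le (\ell+1) + \mathbb{N}^\varphi(n, \ell+1) - 1 \le \ell + 2\ell + 6 = 3\ell + 6,
\]
which is the claimed bound. Two boundary cases round out the argument. First, when $\ell = n$ (so $n$ is even), $\mathbb{N}^\varphi(n, n) = 0 \le 3n + 6$ is immediate. Second, when the totient argument $(\ell+1) + \mathbb{N}^\varphi(n, \ell+1)$ equals $1$, we are forced into $\ell = 0$ and $\mathbb{N}^\varphi(n, 1) = 0$; the latter can only happen when $n = 1$, in which case $\mathbb{N}^\varphi(1, 0) = \varphi(1) = 1 \le 6$ directly.

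The main obstacle, such as it is, is remembering to use the strict inequality $\varphi(m) \le m - 1$ in place of the looser $\varphi(m) \le m$; the weaker bound produces only $3\ell + 7$. An equivalent rescue would be the parity observation that $\mathbb{N}^\varphi(n, \ell)$ is $0$, $1$, or even, and so cannot equal the odd integer $3\ell + 7$. As the paper's own lead-in (\emph{``one result that follows from the proof we just did''}) hints, the corollary is essentially a repackaging of the $3k + 3$ sub-bound already extracted inside the even case of the lemma's inductive step; the work above just confirms that the sub-bound survives the ``$\mathbb{N}^\varphi(n,k)=1$'' case as well.
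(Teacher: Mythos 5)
Your proof is correct and matches the paper's own argument almost step for step: unroll one step from $\ell$ to the odd index $\ell+1$, apply the lemma's bound $2(\ell+1)+4$, and use the strictness of $\varphi(m)\le m$ for $m\ge 2$ to shave the final $+1$, handling $\ell=n$ and the $\varphi(1)$ edge case ($\ell=0$, $n=1$) separately. You also correctly diagnose that the paper's opening appeal to the $3k+3$ sub-bound from the lemma's even case is not self-sufficient (it skips the $\mathbb{N}^\varphi(n,k)=1$ branch), which is precisely why the paper then falls back to the same direct argument you gave.
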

\begin{proof}
    In the previous proof, we showed that for a $k\ge 3$,
    \begin{equation*}
        \mathbb{N}^\varphi(n,k-1)\le 3k+3 = 3(k-1)+6.
    \end{equation*}
    By setting, $\ell=k-1$, we get the bound we want.
    If $n=k$, then clearly $\mathbb{N}^\varphi(n,n)=0$ satisfies the bound. If instead $k<n$, then $\mathbb{N}^\varphi(n,k+1)$ is defined and we have
    \begin{equation*}
        \mathbb{N}^\varphi(n,k+1)\le 2(k+1)+4=2k+6
    \end{equation*}
    from which it follows that
    \begin{equation*}
        \mathbb{N}^\varphi(n,k)
        \quad=\quad
        \varphi(k+1+\mathbb{N}^\varphi(n,k+1))
        \quad\le\quad 3k+7
    \end{equation*}
    And if $k+1+\mathbb{N}^\varphi(n,k+1)\not=1$, then this inequality is actually strict and we can tighten the bound to $\mathbb{N}^\varphi(n,k)\le 3k+6$. If instead $k+1+\mathbb{N}^\varphi(n,k+1)=1$, then we must have $k=0$ and $n=1$ in which case $\mathbb{N}^\varphi(1,0)=1\le 7$
    clearly satisfies the bound.
\end{proof}

\begin{corollary}
    For all $n\ge 1$, we have $\mathbb{N}^\varphi(n)\in\{1,2,4,6\}$.
\end{corollary}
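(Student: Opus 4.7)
The plan is to deduce this corollary almost immediately from the previous one, combined with the parity fact about $\varphi$ listed before the lemma. All the real work has been done.

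First, I would apply the previous corollary with $\ell = 0$ (which is even and $\ge 0$) to obtain
\begin{equation*}
    \mathbb{N}^\varphi(n) \;=\; \mathbb{N}^\varphi(n, 0) \;\le\; 3\cdot 0 + 6 \;=\; 6.
\end{equation*}
This gives the hard part: a uniform upper bound of $6$.

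Next, I would argue that $\mathbb{N}^\varphi(n)$ must be either $1$ or even. For $n = 1$, we directly compute $\mathbb{N}^\varphi(1) = \varphi(1) = 1$. For $n \ge 2$, the definition gives $\mathbb{N}^\varphi(n) = \varphi(1 + \mathbb{N}^\varphi(n,1))$, where $1 + \mathbb{N}^\varphi(n,1)$ is a positive integer. If this argument is $\ge 3$, the third $\varphi$-fact listed before the lemma tells us that $\mathbb{N}^\varphi(n)$ is even; otherwise the argument is $1$ or $2$, and in both cases $\varphi$ returns $1$.

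Combining these two observations, $\mathbb{N}^\varphi(n)$ is an element of $\{1\} \cup \{\text{even integers in } [2,6]\} = \{1, 2, 4, 6\}$, finishing the proof. There is no genuine obstacle here; the entire content of the corollary is packaged in the upper bound from the preceding corollary together with the basic parity of $\varphi$'s image.
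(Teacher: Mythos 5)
Your proof is correct and takes essentially the same approach as the paper: establish a small numerical upper bound from the preceding results, then intersect with the image of $\varphi$ (which is $\{1\}$ together with even numbers) to land on $\{1,2,4,6\}$. The only cosmetic difference is that you invoke Corollary 2.1.1 at $\ell=0$ to get $\mathbb{N}^\varphi(n)\le 6$ directly, whereas the paper invokes Lemma 2.1 at $k=1$ to get $\mathbb{N}^\varphi(n,1)\le 6$ and then applies $\varphi(1+\cdot)\le 1+\cdot$ to reach $\le 7$; both routes give the same final set.
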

\begin{proof}
    The previous lemma tells us that $\mathbb{N}^\varphi(n,1)\le 6$. It follows that 
    \begin{equation*}
        \mathbb{N}^\varphi(n)
        \quad=\quad
        \varphi(1+\mathbb{N}^\varphi(n,1))
        \quad\le\quad 1+\mathbb{N}^\varphi(n,1)
        \quad \le \quad 7
    \end{equation*}
    And since $\mathbb{N}^\varphi(n)$ is in the image of Euler's totient function $1$, $2$, $4$, and $6$ are the only possibilities.
\end{proof}

\begin{theorem}
    The exact values of the scoreboard function $\mathbb{N}^\varphi(n)$ are
    \begin{equation*}
        \mathbb{N}^\varphi(n)
        =\varphi(1+\varphi(2+\varphi(3+...+\varphi(n)
        =\begin{cases}
            1 & n=1,2 \\
            2 & n=3,4 \\
            4 & n\ge 5
        \end{cases}
    \end{equation*}
\end{theorem}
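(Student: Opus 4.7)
The plan is to combine direct computation for the small cases with a backward tree search through preimages of Euler's totient function for the large cases; this is essentially an application of the Arboreal Algorithm by hand.

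For $n=1,2,3,4$, I would just unroll the definition and check that the values are $1,1,2,2$ as claimed.

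For $n\ge 5$, the preceding corollary already traps $\mathbb{N}^\varphi(n)$ in $\{1,2,4,6\}$, so the task reduces to showing that the three values $1$, $2$, and $6$ can only occur for $n\le 4$. I would proceed by backward search: if $\mathbb{N}^\varphi(n,k-1)=v$, then $\mathbb{N}^\varphi(n,k)$ must lie in the finite set $\varphi^{-1}(v)-k$. Three constraints aggressively prune this set at each level: the upper bounds from the previous lemma and corollary, the fact that $\mathbb{N}^\varphi(n,k)$ must be $0$, $1$, or even (since it either equals the base value $0$ or is in the image of $\varphi$), and the observation that $\mathbb{N}^\varphi(n,k)=0$ forces $k=n$.

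Running this search for $\mathbb{N}^\varphi(n)=1$ forces $\mathbb{N}^\varphi(n,1)\in\{0,1\}$, and each branch terminates with $n\le 2$. For $\mathbb{N}^\varphi(n)=2$ the tree widens briefly, but every live branch closes at a base case with $n\le 4$. The most delicate case is $\mathbb{N}^\varphi(n)=6$, where the bounds force the unique chain $\mathbb{N}^\varphi(n,1)=6$, $\mathbb{N}^\varphi(n,2)=12$, $\mathbb{N}^\varphi(n,3)=10$, $\mathbb{N}^\varphi(n,4)=18$, $\mathbb{N}^\varphi(n,5)=14$; the chain dies there because $14$ is a nontotient. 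Combined, this rules out $1$, $2$, and $6$ for $n\ge 5$, leaving only the value $4$.

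The main obstacle is the careful bookkeeping at each level, especially making sure no branch is overlooked when intersecting each preimage set with the parity and magnitude constraints. The search itself terminates quickly because the bounds grow only linearly in $k$ while the preimage sets $\varphi^{-1}(v)$ for small $v$ stay small, so in practice only a handful of candidate values survive at each stage.
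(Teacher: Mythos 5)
Your proposal is correct and takes essentially the same approach as the paper: a backward search through preimages of $\varphi$, pruned by the linear bounds from Lemma 2.1 and its corollary, by nontotient values, and by the observation that a zero partial evaluation forces $k=n$. The only cosmetic difference is that the paper hand-works the case $\mathbb{N}^\varphi(n)=1$ in prose and relegates the cases $2$ and $6$ to a figure, whereas you spell out the $6$-chain ($6\to 12\to 10\to 18\to 14$, dying at the nontotient $14$) explicitly; that chain is correct.
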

\begin{proof}
    Given the preceding corollary, we only have to precisely determine when $\mathbb{N}^\varphi(n)$ takes the values $1$, $2$, $4$, and $6$. In the preceding lemma, we move through the partial evaluations $\mathbb{N}^\varphi(n, k)$ downward, taking bounds. Here, we do roughly the opposite, moving upward through the partial evaluations, taking complete pre-images.

    We work over the case $\mathbb{N}^\varphi(n)=1$ with our bare hands. But since the analogous treatments of $\mathbb{N}^\varphi(n)=2$ and $\mathbb{N}^\varphi(n)=6$ are much more tedious (though not much harder) we elect to present them graphically in the next figure. 

    \newpage

\begin{figure}[h]
    \centering
    \includegraphics[scale=0.3]{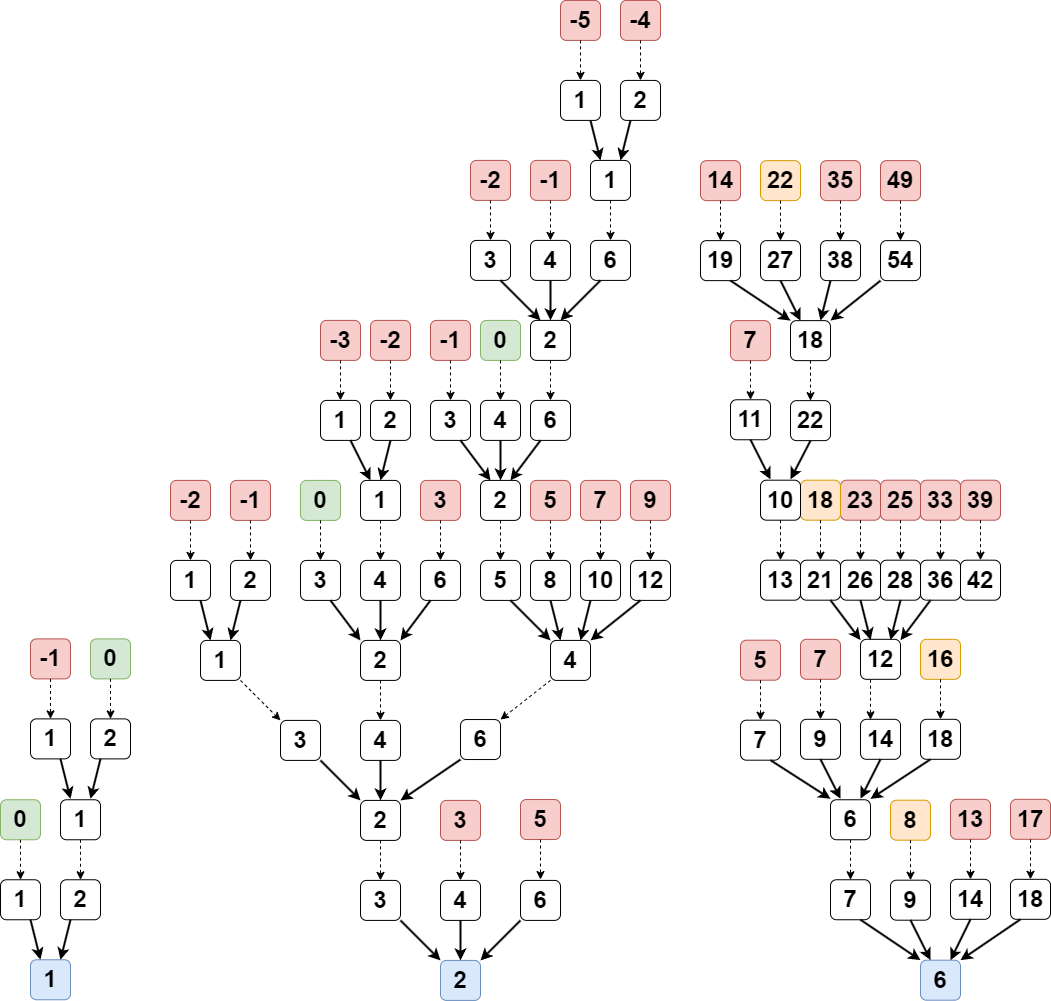}
    \caption{The totient trees corresponding to all possible values that the partial evaluations might take for $\mathbb{N}^\varphi(n)=1$, $\mathbb{N}^\varphi(n)=2$, and $\mathbb{N}^\varphi(n)=6$. Red nodes mark the numbers that are not in the image of $\varphi$. Orange nodes mark numbers that are ruled out by the previous upper bounds. And green nodes mark the legal values for $n$ we're looking for (which the authors like to imagine as fruit hanging from the trees).}
\end{figure}

    Thus for this first case, we suppose that we have 
    \begin{equation*}
        \mathbb{N}^\varphi(n)=\varphi(1+\mathbb{N}^\varphi(n,1))=1
    \end{equation*}
    for some $n$. We naturally ask: which are all the possible integers $x$ such that $\varphi(x)=1$? It's not hard to see that $x=1$ and $x=2$ are the only possibilities. So, plugging these into the former expression, we either have
    \begin{equation*}
        \mathbb{N}^\varphi(n,1)=0\quad\text{or}\quad \mathbb{N}^\varphi(n,1)=1
    \end{equation*}
    The first case would imply that $n=1$ since the partial evaluation $\mathbb{N}^\varphi(n,k)$ can only be zero if $n=k$. In the second case, we can rinse and repeat since we have
    \begin{equation*}
        \mathbb{N}^\varphi(n,1)=\varphi(2+\mathbb{N}^\varphi(n,2))=1
    \end{equation*}
    And it similarly follows that either 
    \begin{equation*}
        \mathbb{N}^\varphi(n,2)=-1\quad\text{or}\quad \mathbb{N}^\varphi(n,2)=0.
    \end{equation*}
    The first case is impossible. And the second case implies $n=2$. Thus, it follows that if $\mathbb{N}^\varphi(n)=1$, then either $n=1$ or $n=2$.

    This approach generalizes to a sort of recursive tree search as visualized in the preceding figure. The result is that $\mathbb{N}^\varphi(n)=2$ can occur only for $n=3,4$ and that $\mathbb{N}^\varphi(n)=6$ can never occur at all.

    It follows that $\mathbb{N}^\varphi(n)=4$ for all $n\ge 5$ and this gives us the case equation in the theorem statement.

\end{proof}

One detail we neglected in this previous proof is to verify that our pre-images of Euler's totient function are correct. That is, we skipped the following problem: given an integer $m$, find all integers $n$ such that $\varphi(n)=m$. Going forward, we call the set
\begin{equation*}
    \varphi^{-1}(n)=\{m\in\mathbb{N}:\varphi(m)=n\}
\end{equation*}
the \textit{totient fiber} of $n$. And we amend the lack of justification here by giving a totient fiber algorithm in the following section.

This turns out to be a delicate matter as well. We pointed out a few pages back that if $x$ is in the image of $\varphi$, then $x$ is either even or $1$. But the natural converse is not true. That is, there are even numbers which are not in the image of $\varphi$. The number $14$ for instance, has an empty totient fiber.

To close out this section, we take half a page to pause and ask...\\

\underline{\textbf{Where are we?}}: Our first main result here was to prove an upper bound on the scoreboard sequence, $\mathbb{N}^\varphi(n)\le 7$. The important parts of this proof were
\begin{enumerate}
    \item that $\varphi$ cuts even numbers in half,
    \item that the image of $\varphi$ is almost entirely even numbers,
    \item and that every other positive integer is even.
\end{enumerate}
In other words, we relied almost entirely on the density of even numbers within the positive integers. And these properties of even numbers all worked together to place a linear bound on the partial evaluations of our scoreboard sequence.

Then with the image of $\mathbb{N}^\varphi(n)$ restricted to finitely many values, we took totient fibers recursively to determine when exactly each value in the image could occur. We found that two such values, $1$ and $2$, occurred for finitely many $n$, one value $4$ occurred for infinitely many $n$, and another value $6$ didn't occur at all (its tree died without bearing fruit!). 

This last point has an interesting meaning for further exploration. It tells us that even if our upper bound on the image of a scoreboard function is optimal, that some possible values simply don't ``happen" to occur.

\newpage

\section{The Arboreal Algorithm}

 The authors were able to calculate the totient trees of $\mathbb{N}^\varphi(n)$ from the previous section with pen and paper. However, we'd also like to analyze the scoreboard functions $A^\varphi(n)$ for sequences other than the positive integers. And our approach becomes unmanageable by human methods in many cases (with some totient trees reaching hundreds fibers high before dying). So we turn to computers for what they do best.

 And if we want an algorithm for totient trees, then we'll first need an algorithm for totient fibers (i.e. pre-images of Euler's totient function $\varphi$). We use the method outlined by Dr. Richard Borcherds in his number theory video series [Bor22].

\begin{algorithm}
\caption{(Totient Fibers) Given an $m$, finds all $n$ such that $\varphi(n)=m$}
\begin{algorithmic}
\Procedure{TotientFiber}{$m$}
\If{$m=1$} \Comment{Special case}
    \State \textbf{yield} $1$
\EndIf
\State $C\gets \{1\}$ \Comment{Stores candidate values for $n$ (denoted $n'$)}
\For{$d\in $ divisors$(m)$}
    \If{is\_prime$(d+1)$}
        \State $p\gets d+1$ \Comment{Iterates over all possible prime divisors of $n$}
        \State $C_p\gets\{\}$ \Comment{New candidate values that are multiples of $p$}
        \For{$n'\in $ fiber}
            \State $k\gets 1$
            \While{$\varphi(n'p^k)\big|m$} \Comment{Checks if $k$ is too big}
                \If{$\varphi(n'p^k)=m$} \Comment{Do we have a value for $n$?}
                    \State \textbf{yield} $n'p^k$ \Comment{Yes: yield the value}
                    \If{$2\not|n'p^k$}
                        \State \textbf{yield} $2n'p^k$ \Comment{and its double if odd}
                    \EndIf
                \Else \Comment{No: add the value to our candidates}
                    \State $C_p\gets C_p\cup\ \{n'p^k\}$
                \EndIf
            \EndWhile
            \State $k\gets k+1$ \Comment{Increments the exponent of $p$}
        \EndFor
        \State  $C_p\gets C\cup C_p$ \Comment{Adds new candidates to the main list}
    \EndIf
\EndFor
\EndProcedure
\end{algorithmic}
\end{algorithm}

To explain this algorithm, we need two more facts about $\varphi$.
 \begin{itemize}
     \item For any $p$ prime and positive integer $k$ we have $\varphi(p^k)=(p-1)p^{k-1}$
     \item If $m$ and $n$ are coprime then $\varphi(mn)=\varphi(m)\varphi(n)$
 \end{itemize}
For the first fact, we notice that exactly $1$ in $p$ integers less than $p^k$ are multiples of $p$ so that we have $\varphi(p^k)=p^k-\frac{1}{p}p^k$. The second fact follows from the Chinese Remainder Theorem. Putting both facts together, we get an exact expression for $\varphi(n)$ modeled on the prime factorization of $n$.
\begin{equation*}
    n=\prod p_j^{r_j}\quad\Rightarrow\quad \varphi(n)=\prod (p_j-1)p_j^{r_j-1}
\end{equation*}

The important bit here is that if we are given $m=\varphi(n)$ and are asked to find all possible values of $n$, this formula tells us that every prime divisor $p_j$ of $n$ must be $1$ more than a divisor of $m=\varphi(n)$. 

For example, if we are told that $\varphi(n)=24$, then the prime factors of $n$ must be a subset of $\{2, 3, 5, 7, 13\}$ since these are the only primes which are $1$ more than a divisor of $24$. And from here one can check (rather tediously) that $n=35,39,45,52,56,70,72,78,84,$ or $90$ by trying all possible products of the former primes. This process is made explicit in the previous algorithm.

We should also note: this totient fiber algorithm was ran using SageMath [Ste24] and the authors found it to run roughly twice as fast when iterating over the divisors of $m$ in descending order rather than ascending (taking on average 4 seconds instead of 10 to compute the totient fibers of the first $10^4$ integers).

Now that we have an algorithm for totient fibers, we can create an algorithm for totient \textit{trees}. In practice, we generate multiple trees at a time and typically with extra code that gathers statistical data along the way. For simplicity here, we only give pseudocode for generating a single tree and finding the zeros (or fruit!) in its branches.

\begin{algorithm}
\caption{(Arboreal Algorithm) Given an integer $x$, a height $k$, a sequence of increments $A=(a_1, a_2, a_3, ...)$, and a sequence of upper bounds $B=(b_0, b_1, b_2, ...)$, finds all $n$ such that $A^\varphi(n)=x$ and $A^\varphi(n, k)\le b_k$.}
\begin{algorithmic}
\Procedure{Fruit}{$x, k, A, B$}
    \If{$k\le |A|$} \Comment{Checks for overgrowth}
        \If{$x=0$} \Comment{Have we found a fruit?}
            \State \textbf{yield} $k$ \Comment{Base case}
        \EndIf
        \If{$0<x\le b_k$} \Comment{Do we meet the upper bound?}
            \For{$y\in $ TotientFiber$(x)$}
                \For{$k'\in $ Fruit$(y-a_{k+1}, k+1, A, B)$}
                    \State \textbf{yield} $k'$ \Comment{Recursive case}
                \EndFor
            \EndFor
        \EndIf
    \Else
        \State \textbf{yield} ``Overgrown"
    \EndIf
\EndProcedure
\end{algorithmic}
\end{algorithm}

\newpage

To make sense of this code, we start with the reminder that each node in a totient tree corresponds to a possible value that some partial evaluation $A^\varphi(n, k)$ might take. And if one such value $y=A^\varphi(n, k)$ and another such value $x=A^\varphi(n, k-1)$ satisfy the recurrence we used to define these partial evaluations,
\begin{equation*}
    x=\varphi(a_k + y)
\end{equation*}
then we draw an edge downwards from $y$ to $x$. Although, in our totient tree diagrams, we actually drew in a bit more for completeness. We drew one node for $y$, another for $a_k+y$, and a third for $x=\varphi(a_k+y)$. Then we drew a dashed arrow down from $y$ to $a_k+y$ (representing the increment portion of our adversarial iteration). And we drew a solid arrow down from $a_k+y$ to $x$ (representing Euler's totient function).

To help guide intuition, we include here part of the totient tree for the scoreboard function obtained from the perfect squares $A=(1,4,9,...)$.

\begin{figure}[h]
    \centering
    \includegraphics[scale=0.35]{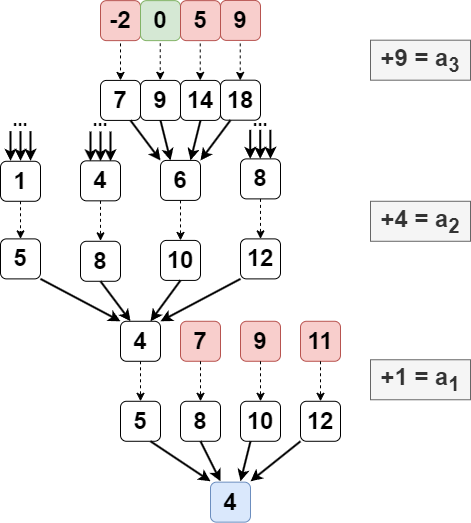}
    \caption{A portion of the totient tree corresponding to $A^\varphi(n)=4$ where $A=(1,4,9,...)$ is the perfect squares.}
\end{figure}

In Figure 5, the blue base node corresponds to $A^\varphi(n)=A^\varphi(n, 0)=4$. Similarly, the white ``$6$" node corresponds to $A^\varphi(n, 2)=6$. And the top green node corresponds to $A^\varphi(n, 3)=0$. We draw a dashed line from the green node down to $9$ since $A^\varphi(n, 3)+a_3=0+9=9$ And we draw a solid line from the $9$ to the $6$ since 
\begin{equation*}
    A^\varphi(n, 2)=\varphi\Big(a_3+A^\varphi(n, 3)\Big)=\varphi(9)=6
\end{equation*}
And all in all, the path from the green $A^\varphi(n, 3)=0$ to the blue $4$ represents the fact that 
\begin{equation*}
    A^\varphi(3)=\varphi(1+\varphi(4+\varphi(9)))=4
\end{equation*}

We think of any zeros as the ``fruit" of the totient tree. Hence, we have titled Algorithm 2 as the ``Fruit" procedure since it effectively yields the heights of the fruit in a given tree. This procedure is a recursive depth-first-search algorithm. When given the tuple $(x, k, A, B)$ we think of the procedure as being ``located" in the node representing $A^\varphi(n, k)=x$. The procedure then ``climbs" the tree by making recursive calls. These come from the recurrence relation
\begin{equation*}
    x=A^\varphi(n, k)=\varphi\Big(a_{k+1}+A^\varphi(n, k+1)\Big)
\end{equation*}
This is because for each element of the totient fiber $y\in \varphi^{-1}(x)$, we have a branch sprouting from the current node. And the new node resulting from such a branch represents the partial evaluation
\begin{equation*}
    A^\varphi(n, k+1)=y-a_{k+1}
\end{equation*}

Now, the red nodes in the former tree represent numbers with empty totient fibers. These empty fibers prune the tree back a bit. However, with this as our only pruning, nearly all totient trees would grow infinitely large. This is why we needed Lemma 2.1. It gives us an additional pruning method. Empty totient fibers prune back branches whose values grow too small. Our upper bound prunes back branches whose values grow too large. 

And so the type of pruning that a tree receives depends on how large, relatively speaking, its base is. For instance, $A^\varphi(n)=4$ is a relatively small value for the scoreboard function of the perfect squares. And accordingly, none of the nodes in the former diagram are killed by the upper bound (which we give in the next section). Oppositely, $\mathbb{N}^\varphi(n)=6$ is a relatively large value for the scoreboard function of the positive integers. And accordingly, the corresponding totient tree, given back in Figure 4, contained quite a few nodes pruned by our upper bound (marked there in orange).

Lastly, we should mention that in practice, we don't give the whole infinite sequence $A$ to the Arboreal Algorithm. Usually, the authors only give it the few hundred terms or so. If a tree grows so large that it exhausts these terms, we have the algorithm print some kind of ``overgrowth" warning. This lets us know that more terms are needed if we want any chance at knowing the fate of the tree in question. 

And that is the real difficulty of this sort of problem. For a general sequence $A$, we have no idea whether a given tree will grow forever like the tree for $\mathbb{N}^\varphi(n)=4$, or if it will die after five or six fibers like the tree for $\mathbb{N}^\varphi(n)=6$. In theory, there could be sequences for which, like $\mathbb{N}^\varphi(n)$, all but one tree die off. Except that even with optimal bounds of the sort given in Lemma 2.1, the other trees may not be totally pruned for millions or trillions of fibers, far beyond the reach of the authors' laptops. And our ability to give a closed form case expression, like that of Theorem 2.2, depends entirely on the dominance of a single tree in the totient forest ecosystem.

But so far, we have only explored the forest of the positive integers. The next sections will introduce new ecosystems with stranger behavior.

\newpage

\section{Euler's $\varphi$ vs. The Perfect Squares}

We built up a few tools in the previous sections for $\mathbb{N}^\varphi(n)$, the scoreboard function of the positive integers. Now it's time to see what else our tools can do. We start with the perfect squares $A=(1,4,9,...)$ giving us the scoreboard function
\begin{equation*}
    A^\varphi(n)=\varphi(1+\varphi(4+\varphi(9+...+\varphi(n^2)
\end{equation*}
as we have already hinted. 

Our methods here follow the same outline: placing an upper bound on the partial evaluations and using the Arboreal Algorithm to determine precise values. Here we omit repeated justification of the basic properties of Euler's totient function.

\begin{lemma}
    For all $n\ge 1$ and odd $k\ge 1$
    \begin{equation*}
        A^\varphi(n, k)\le 2k^2+14k+40
    \end{equation*}
    where $A=(1,4,9,...)$ is the perfect squares.
\end{lemma}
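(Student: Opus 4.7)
The plan is to mirror the proof of Lemma 2.1, performing decreasing induction on odd $k$, with the bound inflated enough to keep pace with the quadratic increments $a_k=k^2$. The base case splits on the parity of $n$: if $n$ is odd, $A^\varphi(n,n)=0$ trivially satisfies the bound; if $n$ is even, the base case $A^\varphi(n,n-1)=\varphi(n^2)\le n^2$ is comfortably below $2(n-1)^2+14(n-1)+40=2n^2+10n+28$ for every $n\ge 1$.

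For the inductive step at odd $k\ge 3$, the target rewrites as
\begin{equation*}
A^\varphi(n,k-2)\;\le\; 2(k-2)^2+14(k-2)+40 \;=\; 2k^2+6k+20,
\end{equation*}
and, just as in Lemma 2.1, the argument splits on whether $A^\varphi(n,k)$ is $0$, $1$, or even. In the easy case $A^\varphi(n,k)\in\{0,1\}$, two applications of $\varphi(m)\le m$ give
\begin{equation*}
A^\varphi(n,k-2)\;\le\;(k-1)^2+k^2+1\;=\;2k^2-2k+2,
\end{equation*}
which sits well under the target for every $k\ge 0$.

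The hard case is $A^\varphi(n,k)$ even, and it is here that the parity argument from Lemma 2.1 must be re-examined. The key observation is that $k^2$ inherits the parity of $k$: since $k$ is odd and $k\ge 3$, the sum $k^2+A^\varphi(n,k)$ is odd and at least $9$, so its $\varphi$-image $A^\varphi(n,k-1)$ is even. Then $(k-1)^2+A^\varphi(n,k-1)$ is a sum of two evens, so the cut-in-half bound for $\varphi$ yields
\begin{equation*}
A^\varphi(n,k-2)\;\le\;\tfrac{1}{2}\bigl((k-1)^2+k^2+A^\varphi(n,k)\bigr)\;\le\;\tfrac{1}{2}(4k^2+12k+41)\;=\;2k^2+6k+20.5,
\end{equation*}
and integrality of $A^\varphi(n,k-2)$ drops this to $\le 2k^2+6k+20$, completing the induction.

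There is no genuine new obstacle beyond Lemma 2.1: the parity cascade ``odd $+$ even $\Rightarrow$ odd, then $\varphi\Rightarrow$ even, then even $+$ even $\Rightarrow$ cut in half'' carries through unchanged because $k^2$ matches $k$ in parity. The only real step of craft is calibrating the slack $14k+40$ so that the $\tfrac{1}{2}(4k^2+12k+41)$ coming out of the cut-in-half step lands within half of the target, with the residual fraction absorbed by integrality. The authors' remark that they ``will be less picky in later proofs'' suggests these constants were chosen with enough room to spare that the strict-inequality refinements used at the end of Lemma 2.1 are not needed here.
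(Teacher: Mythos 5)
Your proof is correct and follows the paper's strategy closely: decreasing induction on odd $k$, base cases split on the parity of $n$, and a halving step at $k-1 \to k-2$ made legal because $(k-1)^2$ and $A^\varphi(n,k-1)$ are both even. The only substantive difference is a bookkeeping choice: the paper uses the strictness $\varphi(m)\le m-1$ for $m>1$ to shave the pre-halving bound to $3k^2+14k+39$, so the halving lands exactly on $2k^2+6k+20$, whereas you carry the looser $3k^2+14k+40$, halve to $2k^2+6k+20.5$, and invoke integrality of $A^\varphi(n,k-2)$ to finish --- each supplies the same half-unit of slack, and your case split on $A^\varphi(n,k)\in\{0,1\}$ versus even plays the same role as the paper's (vacuous, since $k^2+A^\varphi(n,k)\ge 9$) exception case $A^\varphi(n,k-1)=1$.
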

\begin{proof}
    We again induct downwards on $k$. The base cases are
    \begin{equation*}
        A^\varphi(n, n)=0
        \quad\text{and}\quad 
        A^\varphi(n, n-1)=\varphi(n^2)\le n^2\le 2(n-1)^2+14(n-1)+40
    \end{equation*}
    for $n$ odd and $n$ even respectively. Supposing for the inductive case that $A^\varphi(n,k)$ meets the bound, we have
    \begin{equation*}
        A^\varphi(n, k-1)=\varphi\Big(k^2+A^\varphi(n,k)\Big)\le 3k^2+14k+39
    \end{equation*}
    This in turn implies that
    \begin{gather*}
        A^\varphi(n, k-2)=\varphi\Big((k-1)^2+A^\varphi(n,k-1)\Big)\le \frac{1}{2}\Big((k-1)^2 + 3k^2+14k+39\Big)\\
        =2k^2+6k+20=2(k-2)^2+14(k-2)+40
    \end{gather*}
    also meets the bound. Once again the factor of $\frac{1}{2}$ appears because $(k-1)^2$ and $A^\varphi(n, k-1)$ are both even. The only exception to this is when $A^\varphi(n, k-1)=1$ in which case
    \begin{equation*}
        A^\varphi(n, k-2)=\varphi\Big((k-1)^2+1\Big)\le k^2-2k+2
    \end{equation*}
    still meets the bound.
\end{proof}

\begin{corollary}
    For all $n\ge 1$ and even $\ell\ge 0$
    \begin{equation*}
        A^\varphi(n, \ell)\le 3\ell^2+20\ell+57
    \end{equation*}
    where $A=(1,4,9,...)$ is the perfect squares.
\end{corollary}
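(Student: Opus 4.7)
The plan is to mirror Corollary 2.1.1: apply Lemma 4.1 at the odd index $\ell+1$, then take one more step of the recurrence together with the elementary estimate $\varphi(x)\le x$ to land on $\ell$. The constants in Lemma 4.1 were chosen precisely so that this one-step argument closes up exactly at the claimed bound.

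Concretely, I would first dispose of the edge case $\ell=n$, where $A^\varphi(n,n)=0$ trivially satisfies the bound. Otherwise $\ell+1\le n$ is an odd positive integer, so Lemma 4.1 gives
\begin{equation*}
A^\varphi(n,\ell+1) \;\le\; 2(\ell+1)^2+14(\ell+1)+40.
\end{equation*}
Then I would unroll the recurrence by one step,
\begin{equation*}
A^\varphi(n,\ell) \;=\; \varphi\Big((\ell+1)^2+A^\varphi(n,\ell+1)\Big) \;\le\; (\ell+1)^2+A^\varphi(n,\ell+1),
\end{equation*}
and substitute the previous bound. The right-hand side collapses to $3(\ell+1)^2+14(\ell+1)+40 = 3\ell^2+20\ell+57$, which is exactly what the corollary claims.

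There isn't really a hard step here; the main thing to verify is that the arithmetic meets the stated constants, and that $\ell+1\ge 1$ so Lemma 4.1 is applicable (immediate from $\ell\ge 0$). Following the remark near the end of Corollary 2.1.1, one could tighten the constant by a unit by invoking the strict inequality $\varphi(x)<x$ whenever $(\ell+1)^2+A^\varphi(n,\ell+1)>1$, handling the degenerate case $\ell=0$, $n=1$ separately. But since the stated bound falls out directly from the one-step computation, this refinement is unnecessary.
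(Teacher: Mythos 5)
Your proposal is correct and takes essentially the same approach as the paper: set $k=\ell+1$, apply Lemma 4.1 at the odd index $k$, and unroll one step of the recurrence using $\varphi(x)\le x$ to obtain $3k^2+14k+40=3\ell^2+20\ell+57$. The only (minor) difference is that you explicitly dispose of the degenerate case $\ell=n$, where the recurrence step is unavailable; the paper glosses over this but it is a trivial gap.
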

\begin{proof}
    Setting $\ell=k-1$, the previous lemma tells us that
    \begin{equation*}
        A^\varphi(n, \ell)
        =\varphi\Big(k^2+A^\varphi(n, k)\Big)
        \le 3k^2+14k+40=3\ell^2+20\ell+57
    \end{equation*}
\end{proof}

\begin{corollary}
    For all $n\ge 1$, we have $A^\varphi(n)\le 57$ where $A=(1,4,9,...)$ is the perfect squares.
\end{corollary}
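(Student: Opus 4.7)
The plan is to observe that this corollary is essentially an immediate specialization of the preceding corollary. By the notational convention established earlier, $A^\varphi(n) = A^\varphi(n, 0)$, and $\ell = 0$ is even, so the preceding corollary applies directly.

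First, I would recall that the definition $A^\varphi(n, 0) = A^\varphi(n)$ sets up the connection. Then, setting $\ell = 0$ in the bound $A^\varphi(n, \ell) \le 3\ell^2 + 20\ell + 57$ yields
\begin{equation*}
    A^\varphi(n) = A^\varphi(n, 0) \le 3(0)^2 + 20(0) + 57 = 57,
\end{equation*}
which is exactly the desired conclusion.

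There is no real obstacle here; the bound $57$ was clearly engineered in the preceding lemma so that its constant term would survive substitution at $\ell = 0$. The harder work has already been done in Lemma 4.1 (the inductive argument controlling odd indices) and in Corollary 4.2 (bridging from odd to even indices). Since the theorem statement in the introduction promises that $A^\varphi(n)$ actually only takes values in $\{1, 2, 4, 6, 16, 22\}$, this bound of $57$ is far from tight; it merely serves to reduce the image of $A^\varphi$ to a finite search space so that the Arboreal Algorithm from Section 3 can enumerate the finitely many values in the image of $\varphi$ that lie below $57$ and decide, via totient tree analysis, which ones actually appear.
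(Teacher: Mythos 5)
Your proof is correct and takes essentially the same approach as the paper, which simply applies the previous corollary at $\ell = 0$ using $A^\varphi(n) = A^\varphi(n,0)$.
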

\begin{proof}
    Apply the previous corollary to $A^\varphi(n)=A^\varphi(n, 0)$.
\end{proof}

\begin{theorem}
    A complete description of $A^\varphi(n)$ is given by
    \begin{equation*}
        A^\varphi(n)=\varphi(1+\varphi(4+\varphi(9+...+\varphi(n^2)
        =\begin{cases}
            1 & n=1 \\
            2 & n=2 \\
            4 & n=3 \\
            6 & n=4,5,6 \\
            22  & n=9,14,15,17,24,26,30,31,32,\\
                & 33,34,35,38,40,47,53,59,69 \\
            16 & \text{otherwise}
        \end{cases}
    \end{equation*}
    where $A=(1,4,9,...)$ is the perfect squares.
\end{theorem}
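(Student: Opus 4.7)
The plan is to follow the same strategy used for Theorem 2.2: combine the uniform upper bound just proved with the Arboreal Algorithm to enumerate every $n$ consistent with each possible value of $A^\varphi(n)$. By the preceding corollary, $A^\varphi(n) \le 57$, and since every value of $A^\varphi(n)$ is either $1$ or even, the candidate set consists of the elements of $\{1,2,4,6,\ldots,56\}$ whose totient fiber is nonempty. This is about two dozen values.

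For each such candidate $x$, I would invoke the Fruit procedure with the bound sequence $B$ given by Lemma 4.1 and its corollary (namely $b_k = 2k^2+14k+40$ for odd $k$ and $b_k = 3k^2+20k+57$ for even $k$). For $x\in\{1,2,4,6\}$ the trees are small enough to be traced essentially by hand, yielding $n=1$, $n=2$, $n=3$, and $n\in\{4,5,6\}$ respectively. For $x=22$ the tree is much larger, but it should still prune to completion by a combination of empty totient fibers and the quadratic upper bound, producing exactly the eighteen exceptional values listed. For every remaining candidate $x\in\{8,10,12,18,20,24,28,30,32,36,40,42,44,46,48,52,54,56\}$, I would verify that the tree dies off without producing any fruit.

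No independent enumeration is needed for $x=16$. Since $A^\varphi(n)$ is defined for every $n\ge 1$ and must take a value in the candidate set, the "otherwise" case is forced by elimination: any $n$ not appearing in one of the finite lists satisfies $A^\varphi(n)=16$. The Arboreal Algorithm will of course exhibit an infinite tree at $x=16$ analogous to the tree for $\mathbb{N}^\varphi(n)=4$, but this is a byproduct of the proof rather than a prerequisite.

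The main obstacle is that the argument is fundamentally computational. Unlike Theorem 2.2, where a human could trace the three finite trees by pen and paper, here the tree at $x=22$ alone threads through many dozens of totient fibers before being completely pruned, and trusting its termination means trusting the correctness of both the TotientFiber subroutine and the pruning logic of Fruit. A secondary worry is bound-sharpness: the quadratic bound $2k^2+14k+40$ grows considerably faster than the linear bound of the previous section, so one must confirm empirically that it prunes the non-accepting trees in finite depth. If some candidate tree failed to terminate within a reasonable height, the only remedy would be to sharpen Lemma 4.1, which would mean redoing the inductive computation with tighter constants before the enumeration could be completed.
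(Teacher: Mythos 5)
Your proposal matches the paper's proof essentially verbatim: the paper also reduces to the candidate set forced by Corollary~4.1.2, runs the Arboreal Algorithm with the quadratic bounds as pruning thresholds, observes that every tree except $x=16$ dies (five of them bearing fruit, with the $22$-tree alone producing the eighteen exceptional $n$ before expiring over a hundred fibers high), and assigns $16$ to all remaining $n$ by elimination. Your added remarks about the computational trust burden and the possible need to tighten Lemma~4.1 are accurate caveats but do not change the method.
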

\begin{proof}
    By the previous corollary, one only needs to run the Arboreal Algorithm on the positive integers less than $57$. In the resulting forest, every tree eventually died except the one corresponding to $A^\varphi(n)=16$. Five of the other trees bore fruit before dying. The former case equation effectively records the heights of these fruit and which trees they grew in.
\end{proof}

\begin{figure}[h]
    \centering
    \includegraphics[scale=0.35]{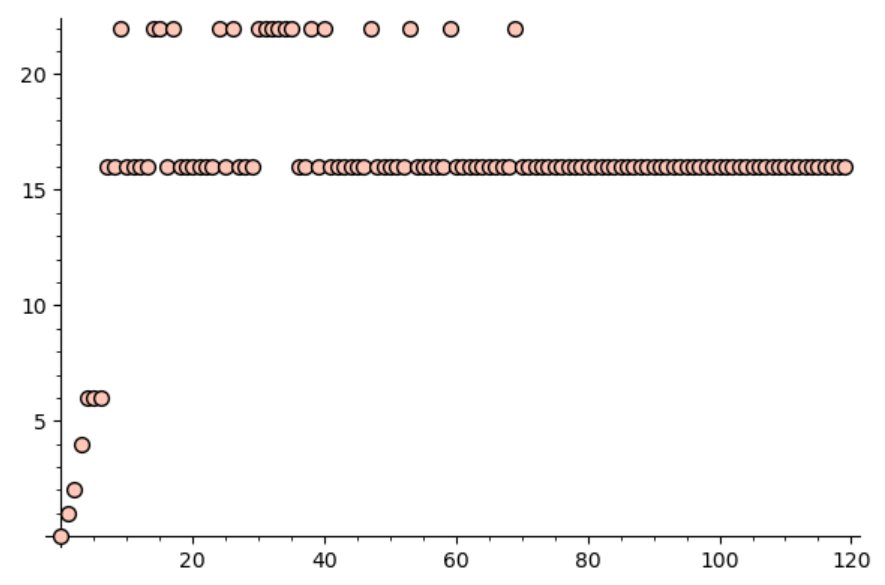}
    \caption{Graph of the scoreboard function $A^\varphi(n)$ where $A=(1,4,9,...)$ is the perfect squares and $1\le n< 120$}
\end{figure}

Interestingly, the tree corresponding to $A^\varphi(n)=22$ bore 18 fruit before finally dying over 100 fibers high. More generally, there is a sort of ``Goldilocks" behavior here. The values in the tree for $22$ were too large and the tree was pruned away by our upper bound. Oppositely, the values in the trees for $1$, $2$, $4$, and $6$ were too small and they were pruned away by empty totient fibers. But the values in the tree for $16$ were \textit{just right} and it grew upwards infinitely. 

\begin{figure}[h]
    \centering
    \includegraphics[scale=0.31]{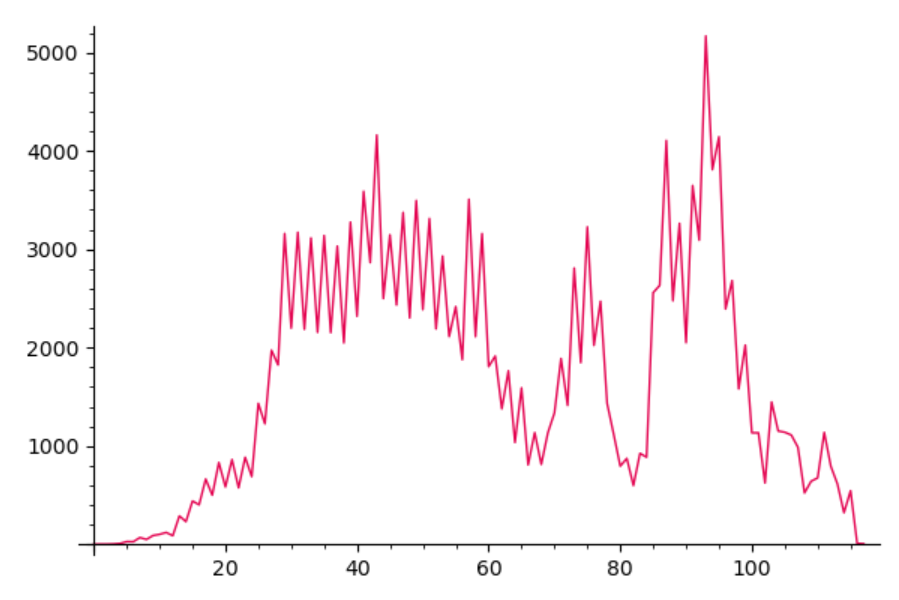}
    \caption{The size of the tree corresponding to $A^\varphi(n)=22$ as it grew. Horizontal axis is height. Vertical axis is nodes in the totient tree at that height.}
\end{figure}

To continue our arboreal metaphor, we can imagine all the values beneath our upper bound to be a sort of ``habitable zone" for the canopies of our trees. And for the totient forest of most scoreboard sequences, it appears that there is a particularly fertile ecological niche right in the center of the habitable zone that (so far) only one tree can occupy.

\begin{figure}[h]
    \centering
    \includegraphics[scale=0.31]{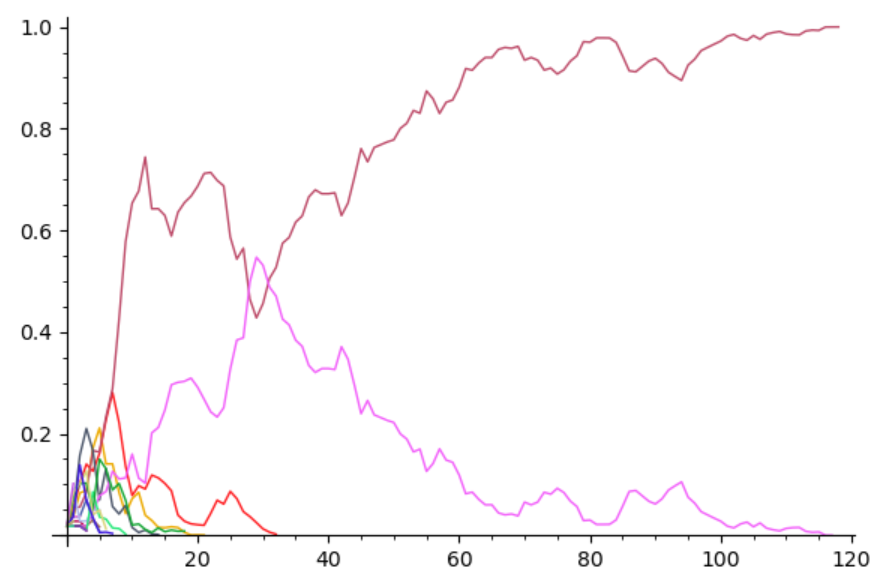}
    \caption{Canopy density by tree for the totient forest of the perfect squares. Horizontal axis is height. Vertical axis is percent of nodes at that height belonging to a given tree. The maroon line corresponds to $A^\varphi(n)=16$ and the pink line to $A^\varphi(n)=22$. The other lines are the smaller trees of the totient forest.}
\end{figure}

\section{Euler's $\varphi$ vs. The Perfect Cubes}

Now that we have solved the scoreboard sequence for the perfect squares, we'd like to go on to the perfect cubes. There are a few difficulties hitting us here.

Firstly, totient trees we are now dealing with are even bigger than those for the perfect squares. The ones we must inspect are hundreds or thousands of fibers high and contain thousands (if not millions) of nodes. Since picturing them as trees is difficult at this magnitude, we will instead plot statistical data from here on out. In the previous section, for instance, Figure 7 plotted the number of nodes which one particular totient tree had at each height. And Figure 8 plotted the percentage of nodes at a given height belonging to each tree in a forest.

The second difficulty is that, previously, every totient tree but one died. This does not seem to be the case with the perfect cubes. Instead, \textit{seven} totient trees manage to occupy the ``habitable zone" without suffocating each other\footnote{The authors verified that all seven of these trees survived up to $10000$ fibers high.}. This might mean that the number of trees which can survive is closely tied to the growth rate of the sequence in question. 

\begin{figure}[h]
    \centering
    \includegraphics[scale=0.4]{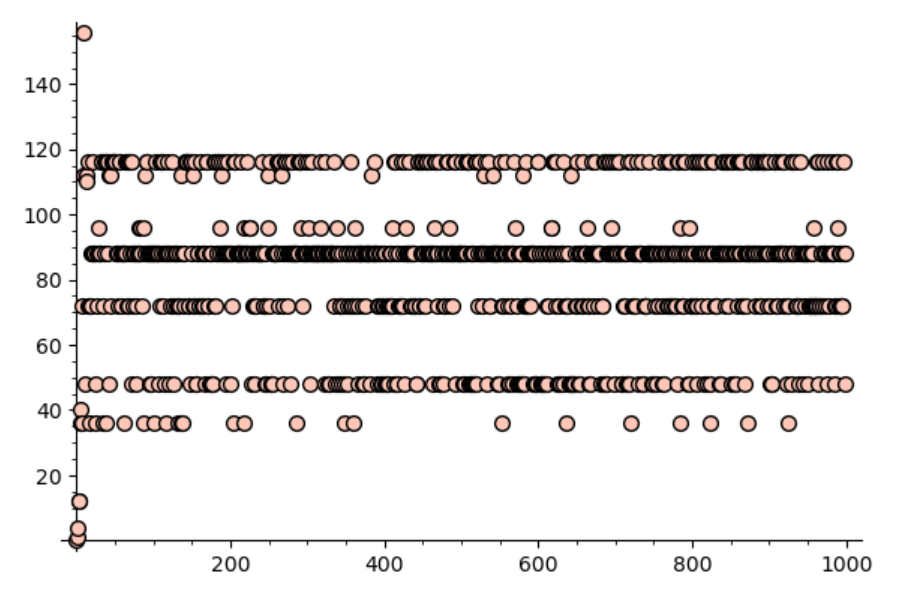}
    \caption{The scoreboard function $A^\varphi(n)$ where $A=(1,8,27,...)$ is the perfect cubes. The horizontal stretches of dots each distinguish a totient tree that survived to the height of $1000$ fibers.}
\end{figure}

Accordingly, we might conjecture that the scoreboard function of a linear or quadratic sequence eventually stabilizes to a single value. This would correspond to all but one tree surviving in the corresponding totient forest. On the other hand, the scoreboard function of a sequence with cubic growth rate might oscillate (seemingly randomly) between a handful of values. This would correspond to the larger habitable zone allowing the coexistence of multiple trees, each endlessly bearing their own fruit.

\newpage

And so, we omit the expected proofs for this section. This is partly 
because the analogous theorem for perfect cubes is much weaker. Because multiple totient trees survived (as far as the authors were able to compute) we weren't able to produce a closed form case expression for the perfect cubes as we could for the positive integers and perfect squares. The closest we can get is
\begin{equation*}
    \varphi(1+\varphi(8+\varphi(27+...+\varphi(n^3)
    =\begin{cases}
        1 & n=1 \\
        4 & n=2 \\
        12 & n=3,4 \\
        36 &  <1\% \text{ chance}  \\
        40 & n=6 \\
        48 & \approx 14\% \text{ chance}  \\
        72 & \approx 23\% \text{ chance}  \\
        88 & \approx 35\% \text{ chance} \\
        96 & \approx 3\% \text{ chance}  \\
        110 & n=13 \\
        112 & \approx 2\% \text{ chance}  \\
        116 & \approx 23\% \text{ chance}  \\
        156 & n=9 \\
    \end{cases}
\end{equation*}

These percent chances are estimates based on the first $6000$ terms of the scoreboard sequence of the perfect cubes.

\begin{figure}[h] 
    \centering
    \includegraphics[scale=0.35]{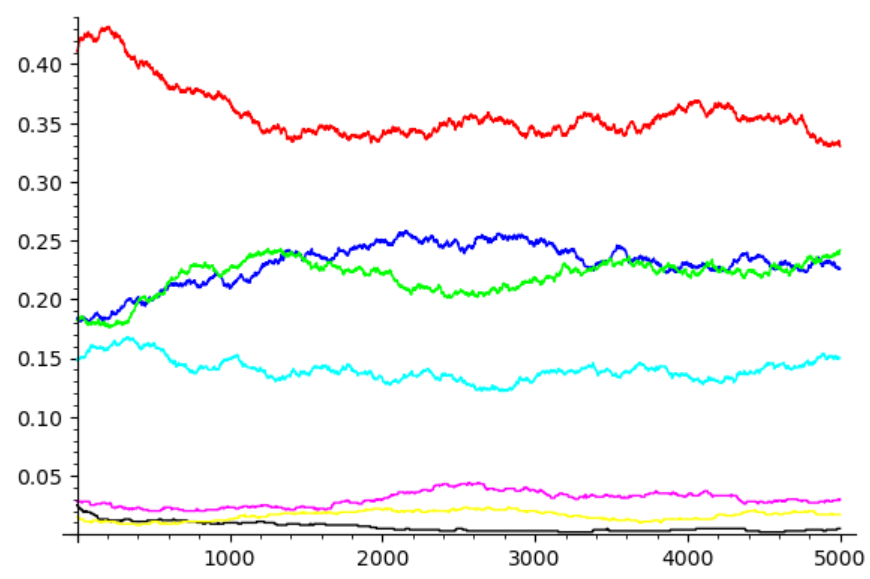}
    \caption{The percentage of fruit that each tree produces in the totient forest of the perfect cubes. Calculated over a rolling window of $1000$ totient fibers. The colors red, green, blue, cyan, magenta, yellow, and black correspond to the trees for $A^\varphi(n)=88$, $ 116$, $ 72$, $ 48$, $ 96$, $ 112,$ and $36$ respectively.}
\end{figure}

\newpage

\section{Where are we (going)?}

For an integer sequence $A=\{a_j\}$, we introduced the \textit{scoreboard function} of $A$,
$$A^\varphi(n)=\varphi(a_1+\varphi(a_2+\varphi(a_3+...+\varphi(a_n)$$
which can be seen as an adversarial iteration between Euler's $\varphi$ function and the sequence $A$. Long-term, the authors would like closed-form expressions and upper bounds on $A^\varphi(n)$ for various classes of sequences. 

We achieved a closed-form expression when $A$ is the positive integers or the perfect squares. This was in part done by placing inductive upper bounds on the adversarial iterations using the number theoretic properties of Euler's totient function. Additionally, we used the \textit{Arboreal Algorithm} to find closed form expressions for bounded scoreboard sequences by simulating the growth of tree-like structures.

Then, as a counter-example, we gave numeric evidence that for sequence of the perfect cubes $A=(1,8,27,...)$, the scoreboard function $A^\varphi(n)$ may not admit a closed-form expression. Essentially, we suggest that the only way to compute $A^\varphi(n)$ in the case of cubes, is by evaluating the recursive definition of the scoreboard function for individual values of $n$.

Now the methods used here for the positive integers and perfect squares will generalize to any sequence with 
\begin{itemize}
    \item Polynomial growth rate
    \item Positive lower density of even numbers
\end{itemize}
These seem to be the only essential properties used in our deductions. In particular, we expect that any \textit{integer polynomial sequence }
$$A=\Big(f(1),\ f(2),\ f(3),\ ...\Big)$$
given by an integer polynomial $f\in \mathbb {Z}[x]$ will admit an upper bound so long as it is not entirely valued on odd numbers (such odd-valued polynomials account for $25$\% of $\mathbb Z[x]$ in a reasonable sense). 

So the most natural sequence for which the authors have no current hope of generalizing the prior methods is the positive odd integers $A=(1,3,5,7,...)$. From the calculation of small values, the corresponding scoreboard sequence 
$$A^\varphi(n)=\varphi(1+\varphi(3+\varphi(5+...\varphi(2n-1)$$
appears to be bounded. But we have no way to prune back the totient trees and thus no hope of applying any variation of the Arboreal Algorithm.

One can at least show that the scoreboard sequences of a bounded sequence is itself bounded. If this were not so, one could use the partial evaluations to bound the prime gaps (which, of course, are unbounded). It may also be possible to show that certain slow growing sequences, like $A(n)=\lfloor\log\log(n)\rfloor$, have bounded scoreboard sequences regardless of how many even numbers they contain. The authors' hope is that if $A^\varphi(n)$ grows large for a very slow-growing $A$,  one could use the partial evaluations to place a lower bound on the asymptotic density of the primes (hopefully a bound large enough to contradict known upper bounds).

To close out, we leave the reader with an observation about exponential sequences. There's a sense, given in [Bor22], that the expected value of Euler's totient function is
$$\mathbb E[\varphi(n)]=\frac{6}{\pi^2}n$$
This means that iteratively applying Euler's totient function looks very much like exponential decay at a large scale. Thus, we expect roughly half of exponential sequences (those with growth rate $<\frac{\pi^2}{6}$) to have bounded scoreboard sequences and the other half (those with growth rate $\ge \frac{\pi^2}{6}$) to have unbounded scoreboard sequences. It would be interesting, in particular, to know whether the scoreboard sequence of the Fibonacci numbers is bounded.

\section*{References}

\noindent [Bor22] Richard Borcherds, \textit{Introduction to number theory lecture 14. Euler's totient function}, YouTube 2022\\

\noindent [Ste24] William A. Stein et al. Sage Mathematics Software (Version 10.4), The Sage Development Team, 2024, http://www.sagemath.org.

\end{document}